\newtheorem{thm}{Theorem}[]
\newtheorem*{thm*}{Theorem}
\newtheorem{lem}[thm]{Lemma}
\newcommand{\param}{{\mathchoice{\mkern1mu\mbox{\raise2.2pt\hbox{$
\centerdot$}}
\mkern1mu}{\mkern1mu\mbox{\raise2.2pt\hbox{$\centerdot$}}\mkern1mu}{
\mkern1.5mu\centerdot\mkern1.5mu}{\mkern1.5mu\centerdot\mkern1.5mu}}}
\renewcommand{\setminus}{{\smallsetminus}}
\renewcommand \color [2][]{}
\begin{document}

\title       {The Chillingworth Class is a Signed Stable Length.}
\author   {Ingrid Irmer}
\address {Department of Mathematics\\
               National University of Singapore\\
               10 Lower Kent Ridge Road
               Singapore 119076}
\email      {matiim@nus.edu.sg}
\maketitle

\begin{abstract} An orientation is defined on a family of curve graphs on which the Torelli group acts. It is shown that the resulting signed stable length of an element of the Torelli group is a cohomology class. This cohomology class is half the dual of the contraction of the Johnson homomorphism, the so-called ``Chillingworth class''.
\end{abstract}

\section{Introduction}
Let $S_{g}$ be a closed, oriented surface with genus $g\geq 3$, and $S_{g,1}$ an oriented surface with genus $g\geq 3$ and one boundary curve. The \textit{mapping class group} of $S_{g}$, denoted Mod($S_{g}$) is the group of isotopy classes of orientation preserving homeomorphisms of $S_{g}$. The group Mod($S_{g,1}$) is defined similarly, with the added condition that the homeomorphisms act as the identity on the boundary of $S_{g,1}$. The group Mod($S_{g}$) induces an action on $H_{1}(S_{g},\mathbb{Z})$, and the \textit{Torelli group}, $\mathcal{T}_{g}$ is the kernel of this action. Similarly for $\mathcal{T}_{g,1}$.\\

\textbf{Johnson Homomorphisms.} The \textit{Johnson homomorphisms}, \cite{Johnson}, are homomorphisms 
\begin{equation}
t_{g,1}:\mathcal{T}_{g,1}\rightarrow \wedge^{3}H_{1}(S_{g,1};\mathbb{Z}) \text{ and }t_{g}:\mathcal{T}_{g}\rightarrow \wedge^{3}H_{1}(S_{g};\mathbb{Z})/H_{1}(S_{g};\mathbb{Z})
\end{equation}
The Johnson homomorphisms are amongst the most basic structures in the study of the Torelli group, for example, for understanding group homology, \cite{Johnsonsurvey} and 3-manifold theory, \cite{Morita}. In \cite{Chill2}, Chillingworth used a notion of winding number to define the \textit{Chillingworth class}; a map from the Torelli group to an integral first cohomology class of $S_{g,1}$ or $S_{g}$. It was shown that the vanishing of this map is a necessary condition for a mapping class in the Torelli group to be in the kernel of the Johnson homomorphism, i.e. the \textit{Johnson kernel}.  Johnson showed in \cite{Johnson} that the Chillingworth class is dual to a contraction of the Johnson homomorphism. Some surveys on the Johnson homomorphism can be found in \cite{Johnsonsurvey} and \cite{Mappingclassbible}.\\

\textbf{Curve Graphs.} Let $C(S_{g,1}; h)$ be a graph with vertices consisting of oriented isotopy classes of simple closed curves in the primitive homology class $h\in H_{1}(S_{g,1}; \mathbb{Z})$. Any two vertices representing disjoint curves whose difference bounds one or more subsurfaces of $S_{g,1}$ of genus 1 are connected by an edge. The graph $C(S_{g},h)$ is defined analogously.\\

\begin{figure}
\begin{center}
\def\svgwidth{13cm}
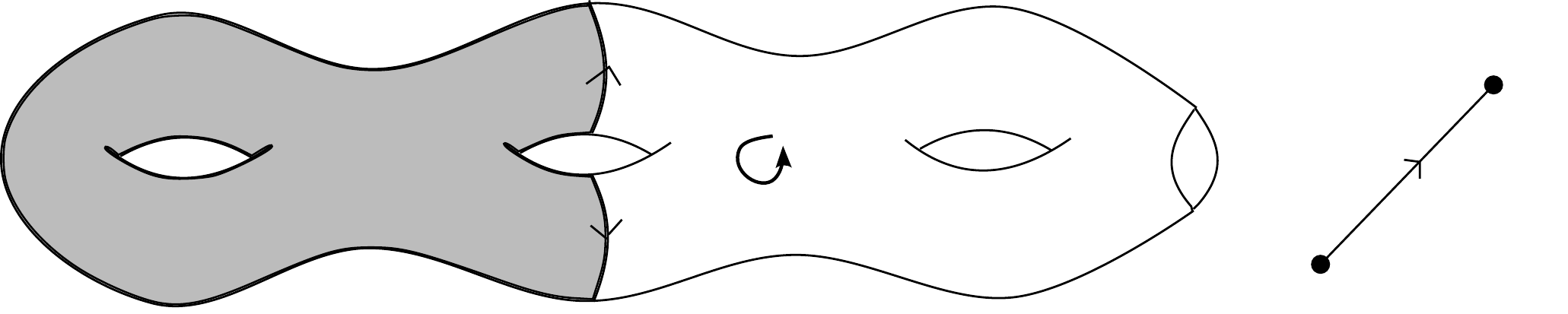
\label{curvegraph}
\caption{The shaded subsurface represents an edge of the curve graph $C(S_{3,1},h)$}
\end{center}
\end{figure}

Since there are no bounding pairs in $S_2$, $C(S_{2}; h)$ does not have any edges. For this reason, and also because the Johnson homomorphism, and hence the Chillingworth class, is zero in genus 2, \cite{Johnson}, we will not be considering surfaces with genus two.\\


\textbf{Signed distance.} There is of course the usual combinatorial distance $d(v_{1},v_{2})$ defined on the graphs $C(S_{g,1}; h)$ and $C(S_{g}; h)$. Since a path in $C(S_{g,1}; h)$ or $C(S_{g}; h)$ defines a surface in a 3-manifold, as described in \cite{Me}, the combinatorial distance is a special case of the Thurston norm, defined in \cite{Thurstonnorm}. To  turn this norm into a cohomology class, orientation information, in the form of a signed distance, $d_{s}(v_{1},v_{2})$, on $C(S_{g,1}; h)$ or $C(S_{g}; h)$, is utilised.\\

Calculating Thurston norm, or the very closely related Gromov norm, involves finding an infimum analogous to the infimum that is a stable length of a group action on a metric space such as a graph. Let $\eta$ be an endomorphism of a free group $F$. In \cite{Cal}, Section 9, Gromov norm of the HNN extension of $F$, ${F_{*}}_{\eta}$, was shown to be a ``translation length'' of $\eta$ acting on $K(F,1)$. The ``translation length'' in this context was defined to be a \textit{stable} commutator length, whereas translation length, (also called stable length) on $C(S_{g,1},h)$ is a commutator length. The action of an element of the mapping class group of $S_{g,1}$ on the fundamental group of the surface gives an endomorphism of a free group. In \cite{Cal} it was emphasized that the Chillingworth class is a rotation quasi-morphism. Rotation quasi-morphisms are defined in \cite{scl} and are used to estimate, or sometimes calculate, stable commutator length.\\

Recall the convention that the boundary of a subsurface of $S_{g,1}$ or $S_g$ is oriented in such a way that it has the subsurface to its left. A subsurface of $S_{g,1}$ or $S_g$ is assigned +1 if it is oriented as a subsurface of $S_{g,1}$ or $S_g$ and -1 otherwise. In particular, this gives an orientation to the edges of the curve graphs, as shown in figure \ref{curvegraph}.\\

Choose a path $\gamma$ connecting $v_1$ and $v_2$ in $C(S_{g,1}; h)$ or $C(S_{g};h)$, and let $l_{s}(\gamma)$ be the number of edges traversed by $\gamma$ in the positive direction minus the number of edges traversed in the negative direction. In Lemma \ref{sd} it is shown that, in $C(S_{g,1},h)$, $l_{s}(\gamma)$ is independent of the choice of $\gamma$. In this case, the \textit{signed distance}, $d_{s}(v_{1},v_{2})$, is defined to be $l_{s}(\gamma)$. For $S_g$, the signed distance is only independent mod $g-1$ of the choice of $\gamma$, and is defined to be this equivalence class mod $g-1$. The closed surface $S_3$ is a special case, because as shown in figure \ref{boundingpair}, a bounding pair cuts $S_3$ into two subsurfaces of genus one, so it is not clear how to orient edges of the curve graph. However, since $+1$ is congruent to $-1$ mod 3-1, any arbitrary choice of edge orientation will determine the same signed length of paths mod 2.\\

\begin{figure}
\centering
\includegraphics[width=13cm]{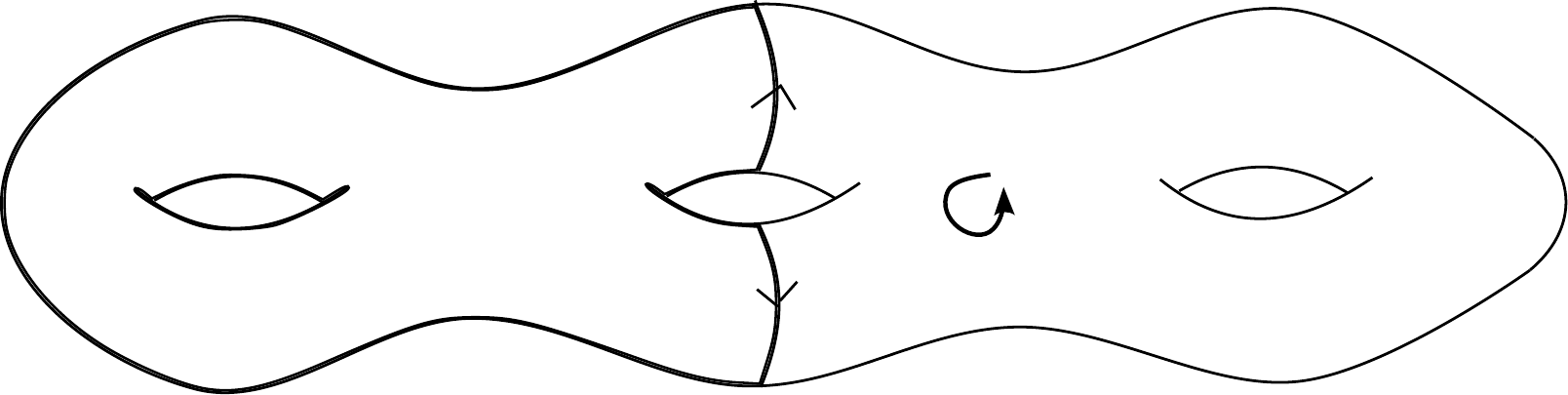}
\caption{A bounding pair cuts $S_3$ into two surfaces of genus 1.}
\label{boundingpair}
\end{figure}

Since the respective Torelli groups preserve orientation and act on $C(S_{g},h)$ and $C(S_{g,1},h)$ by isometry, signed distance is also preserved by the Torelli group.\\

\textbf{Signed Stable Length.} Choose a primitive homology class $h$, and define the signed stable length $\phi(\tau,h)$ of an element $\tau$ of $\mathcal{T}_{g,1}$ acting on $C(S_{g,1}; h)$ as follows:
\begin{equation}
\phi(\tau,h)=lim_{n\rightarrow \infty}\frac{d_{s}(v,\tau^{n}v)}{n}
\end{equation}
where $v$ is any vertex of $C(S_{g,1}; h)$. It is not a priori clear that this is well defined, however this will be shown to follow from lemma 2.\\

By a construction due to Hatcher, \cite{Cycle}, $C(S_{g,1}; h)$ and $C(S_{g}; h)$ are seen to be connected. Since the respective Torelli groups act on the curve graphs $C(S_{g,1}; h)$ and $C(S_{g}; h)$ by isometry, it follows immediately that stable length, signed or otherwise, is locally independent of the choice of vertex $v$, and hence globally independent by connectivity.\\

Lemma \ref{nolimit} states that $\phi(\tau,h)=d_{s}(v,\tau v)$, for any vertex $v$ in $C(S_{g,1}; h)$. For closed surfaces, we have to make sense of what it means to stabilise a quantity that is only defined mod $g-1$. In analogy with open surfaces, for $\tau\in \mathcal{T}_{g}$ and $h$ a primitive element of $H_{1}(S_{g};\mathbb{Z})$, $\phi(\tau,h):=d_{s}(v,\tau v)$. For both $C(S_{g,1}; h)$ and $C(S_{g}; h)$, if $h$ is not primitive, i.e. $h=\lambda [c]$ for a primitive curve $c$, then $\phi(\tau,h):=\lambda \phi(\tau,[c])$.\\

Observe that, unlike for the mapping class group acting on the curve complex, a different stable length is obtained depending on the choice of homology class defining the graph on which the element of the Torelli group acts. An illustrative example is given by the action of a bounding pair map $T_{a}T^{-1}_{b}$. Here $T_{a}$ denotes a Dehn twist around the curve $a$, and it is assumed that the oriented multicurve $a-b$ is the boundary of a subsurface of genus 1. A bounding pair map is not pseudo-Anosov, so its action on the curve complex has stable length zero. The calculation in Section 3.4 gives a formula for signed stable length of $T_{a}T^{-1}_{b}$ on $C(S_{g,1},h)$, depending on the algebraic intersection number of $[a]$ with $h$. For bounding pair maps, arguing as in Subsection \ref{kernel}, it is not hard to show that signed stable length is equal to unsigned stable length on $C(S_{g,1},h)$.\\


It is finally possible to state the theorem of this paper.

\begin{thm}
The signed stable length $\phi$ is half the Chillingworth class.
\label{Maintheorem}
\end{thm}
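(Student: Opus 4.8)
The plan is to reduce the identity to a single local computation on an edge of the curve graph and then sum. Fix a nowhere-zero vector field $X$ on $S_{g,1}$ — one exists since $\partial S_{g,1}\neq\emptyset$ — and, for an oriented simple closed curve $c$, let $\omega_X(c)\in\mathbb{Z}$ be its winding number relative to $X$, i.e.\ the degree of the map comparing the positively-oriented unit tangent of $c$ with $X$. By Chillingworth \cite{Chill2} (see also \cite{Johnson}), for $\tau\in\mathcal{T}_{g,1}$ the difference $\omega_X(\tau c)-\omega_X(c)$ depends only on $[c]$, is linear in it, and the resulting class in $H^1(S_{g,1};\mathbb{Z})$ is the Chillingworth class; Johnson's theorem further identifies this class with the dual of the contraction of the Johnson homomorphism (the description used in the abstract), though we shall not need that identification. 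So, up to a fixed overall sign, Theorem~\ref{Maintheorem} is equivalent to
\begin{equation}
\phi(\tau,h)\;=\;\pm\tfrac12\bigl(\omega_X(\tau c)-\omega_X(c)\bigr),\qquad \tau\in\mathcal{T}_{g,1},\ [c]=h .
\label{eq:windingreform}
\end{equation}

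Now the local step. Let $v_0,v_1$ be adjacent vertices of $C(S_{g,1};h)$ and let $\Sigma$ be a genus-one subsurface they cobound. Since $h$ is primitive, no boundary curve of $\Sigma$ can be null-homologous, which rules out the degenerate case of a single boundary circle; and $\partial\Sigma=v_0\cup v_1$ by the edge definition, so $\Sigma$ has exactly two boundary circles and $\chi(\Sigma)=-2$. Orienting $\partial\Sigma$ with $\Sigma$ on its left, the two boundary curves carry opposite homology orientations, so $\partial\Sigma$ is $v_1-v_0$ or $v_0-v_1$ in the vertex orientations; this dichotomy is exactly the $+1$/$-1$ orientation $\epsilon$ of the edge in Figure~\ref{curvegraph}. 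Restricting $X$ to $\Sigma$ and applying the Poincar\'e--Hopf/Gauss--Bonnet identity for a nowhere-zero field on a compact surface with boundary — the sum of the boundary winding numbers, each taken with the surface on its left, equals the Euler characteristic — gives $\omega_X(v_1)-\omega_X(v_0)=-2\epsilon$. Summing along a path $\gamma$ from $v$ to $\tau v$, the left side telescopes to $\omega_X(\tau v)-\omega_X(v)$ while the right side is $-2\,l_s(\gamma)$; since $l_s(\gamma)=d_s(v,\tau v)$ (Lemma~\ref{sd}) and $d_s(v,\tau v)=\phi(\tau,h)$ (Lemma~\ref{nolimit}), this is \eqref{eq:windingreform}. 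The right side of \eqref{eq:windingreform} is, by Chillingworth's theorem, linear in $h$ and additive in $\tau$, so the same holds for $\phi$; in particular $\tau\mapsto\phi(\tau,\param)$ is a well-defined integral cohomology class, namely half the Chillingworth class.

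For the closed surface $S_g$ no nowhere-zero field exists; I would instead use a field $X$ with a single zero of index $\chi(S_g)$ placed off every curve in play, so that $\omega_X$ is defined only modulo $2(g-1)$ and the same computation yields \eqref{eq:windingreform} modulo $g-1$ — which matches the fact that $d_s$ on $C(S_g;h)$ is itself only defined modulo $g-1$, with $S_3$ the degenerate case of Figure~\ref{boundingpair}; for non-primitive $h=\lambda[c]$ both sides scale by $\lambda$. I expect the one genuinely delicate point to be the sign and orientation bookkeeping: reconciling the $+1$/$-1$ rule for subsurfaces, the ``subsurface on the left'' convention, the sign in the definition of $\omega_X$, and the sign relating the winding-number and Johnson-contraction descriptions of the Chillingworth class, so that the constant comes out to exactly $+\tfrac12$. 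As an independent check avoiding winding numbers altogether, one can instead use that $\mathcal{T}_{g,1}$ is generated by bounding pair maps, that $\phi(\param,h)$ is a homomorphism by the telescoping property of $d_s$, and then match the formula for $\phi(T_{a}T^{-1}_{b},h)$ from Section~3.4 against the known value of the Chillingworth class on a bounding pair map.
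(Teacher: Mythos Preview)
Your argument is correct, and it takes a genuinely different route from the paper's. The paper proceeds indirectly: it first shows that $\phi(\tau,\param)$ is a homomorphism on homology (Lemma~\ref{pathindependenceagain}) and that $\phi(\param,h)$ is a homomorphism on the Torelli group (Lemma~\ref{homomorphism}), checks that $\phi$ vanishes on the Johnson kernel, and then verifies the identity on generators by computing $\phi(T_aT_b^{-1},[\alpha])$ on a bounding pair map and matching it against Johnson's published formula for the Chillingworth class on bounding pairs. Your approach bypasses all of this by a single local computation: the Poincar\'e--Hopf identity on a genus-one two-holed subsurface gives $\omega_X(v_1)-\omega_X(v_0)=\chi(\Sigma)\cdot\epsilon=-2\epsilon$ for each edge, and telescoping along a path yields \eqref{eq:windingreform} in one stroke. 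The homomorphism properties of $\phi$ in both variables, and the vanishing on the Johnson kernel, then come for free from the corresponding known properties of the Chillingworth class rather than being proved from scratch. Your final paragraph's ``independent check'' via bounding pair maps \emph{is} the paper's proof.

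What your approach buys is brevity and a transparent mechanism linking the two invariants; it even re-proves Lemma~\ref{sd} for $S_{g,1}$ as a byproduct, since the telescoped right-hand side depends only on the endpoints. What the paper's approach buys is that it never invokes the relative Poincar\'e--Hopf formula for surfaces with boundary, and it is self-contained in the sense that the intermediate lemmas about $\phi$ are established directly rather than inherited from the winding-number side. Your own caveat about the sign bookkeeping is apt: reconciling the edge-orientation convention of Figure~\ref{curvegraph} with the ``surface on the left'' convention and the sign in $\omega_X$ so that the constant is exactly $+\tfrac12$ (and agrees with Johnson's normalisation) is the only place where care is needed, and the paper's generator computation in Section~\ref{evaluation} would serve as the calibration.
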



\section{Acknowledgements} Thanks to Andy Putman and Dan Margalit for discussions on earlier incarnations of this result, and to Danny Calegari, Allen Hatcher and a very thorough reviewer for comments and improvements. This work was funded by a MOE AcRF-Tier 2 WBS grant Number R-146-000-143-112.

\section{Some Background on the Chillingworth Class}
In this section, suppose all curves are closed, simple, oriented and with continously varying, nowhere zero tangent vector. Let $X$ be a nowhere zero vector field on $S_{g,1}$. In \cite{Chill}, Chillingworth defined the winding number of a curve $\gamma$ with respect to $X$. When presented precisely and in all generality, the definition is quite long, therefore only the general idea of the special case needed for studying the mapping class group will be given here. \\

Intuitively, the winding number $\omega_{X}(\gamma)$ is the number of times the tangent vector to $\gamma$ rotates relative to $X$ as $\gamma$ is traversed once in its positive direction. To relate winding numbers to cohomology classes, it was shown that $\omega_{X_{1}}(\gamma)-\omega_{X_{2}}(\gamma)$ only depends on the integral homology class of $\gamma$.\\

In \cite{Chill2} applications of winding number to the study of mapping class groups were discussed. At the end of this paper the conjecture was made that for $\tau \in \mathcal{T}_{g,1}$, if $\omega_{\tau^{*}X}(\gamma)-\omega_{X}(\gamma)=0$ for any curve $\gamma$, then $\tau$ must be in the Johnson kernel. (Back then, the Johnson kernel was not yet known to be a kernel, but was considered interesting in its own right as the normal subgroup generated by Dehn twists around separating curves). In \cite{Johnson}, Johnson showed this conjecture to be false, and started calling the cohomology class $d(\tau^{*}X, X)$, defined by 
\begin{equation*}
\langle d(\tau*X,X),[\gamma] \rangle = \omega_{\tau^{*}X}(\gamma)-\omega_{X}(\gamma)
\end{equation*}
the ``Chillingworth class''. This cohomology class can be shown to be independent of the choice of $X$.\\

In Section 6 of \cite{Chill}, winding numbers on \textit{closed} surfaces were defined. For closed surfaces of genus greater than or equal to two, there are no nonvanishing vector fields, so suppose $X$ only has one zero; call this point $p$. When a smooth homotopy moves the curve $\gamma$ over $p$, by the Poincar\'e-Hopf index theorem, the winding number of $\gamma$ is changed by $\pm(2-2g)$. It follows that for closed surfaces, the Chillingworth class is only defined mod $2g-2$.\\

\section{Proof of Theorem}
\begin{proof} The proof of the theorem is broken up into four parts.

\subsection{Signed distance}\label{Sectionone}
In this subsection, some basic properties of signed distance on $C(S_{g,1}; h)$ and $C(S_{g}; h)$ are established.\\

A \textit{curve} is an oriented isotopy class of closed loops not homotopic into the boundary of $S_{g,1}$. When this does not lead to ambiguity, the same symbol will be used for a curve and a particular representative of the isotopy class. Also, in order to show that signed length only depends on the endpoints of the path, or only on the endpoints of the path mod $g-1$, properties of surfaces in the 3-manifolds $S_{g}\times I$ and $S_{g,1}\times I$ will be used. In both cases, the surface $S_{g}$ or $S_{g,1}$ maps into the 3-manifold. Where this does not cause confusion, the same notation will be used for a curve on a surface and the isotopy class of its image in the 3-manifold.\\

A surface invariant will now be defined that is important in the proof of the next lemma.\\

\textbf{The pre-image function.} This definition is taken from \cite{Me}. Let $H$ be a connected, immersed surface in $S_{g}\times I$. Curve graph distances are related to surfaces via the \textit{pre-image function} $p_{H}:S_{g}\times 0\setminus \pi(\partial H) \rightarrow \mathbb{Z}$. Suppose $P:=S_{g}\times I$ and $B$ are open sets in $(S_{g}\times \left\{0\right\})\setminus \pi(\partial H)$. Algebraic intersection number provides a map $H_{2}(P, \partial H)\times H_{1}(P,B)\to \mathbb{Z}$. For $x$ in $(S_{g}\times \left\{0\right\})\cap B$,
\begin{equation}
p_{H,B}(x):=\hat{i}(H, x\times I)
\end{equation}

For all $x\subset S_{g}\times 0 \setminus \pi(\partial H)$ there is a choice of $B$ such that $x\subset B$. To see that the function $p_{H,B}$ does not depend on the choice of open set $B$, note that if $B\subset B^{'}\subset (S_{g}\times 0)\setminus \pi(\partial H)$, it follows from the naturality of the intersection pairing with respect to inclusions (\cite{Dold} Proposition 1.3.4) that the diagram below commutes.\\
$$
\xymatrixrowsep{1in}
\xymatrixcolsep{1in}
\xymatrix{B \ar@{^{(}->}[r] \ar[dr]^{p_{H,B}}& B^{'}\ar[d]^{p_{H,B^{'}}}\\
            & \mathbb{Z}}
$$


If $B_{1}\subset B_{2} \subset B_{3}\ldots$ and $D_{1}\subset D_{2} \subset \ldots$ are any two sequences of open subsets of $(S_{g}\times \{0\})\setminus \pi(\partial H)$ with direct limit $(S_{g}\times \{0\})\setminus \pi(\partial H)$, it follows that they both give rise to the same function $p_{H}$ as the sequence $B_{1}\cap D_{1}\subset B_{2}\cap D_{2}\subset \ldots$, which also has direct limit $(S_{g}\times \{0\})\setminus \pi(\partial H)$. Hence $p_{H}$ is well defined on $(S_{g}\times \{0\})\setminus \pi(\partial H)$, and is extended to an upper semi-continuous function defined on all of $S_{g}\times \{0\}$ .\\

The pre-image function is defined analogously for $S_{g,1}$.\\



\textbf{Surfaces and curve graph paths. }Let $c_1$ and $c_2$ be curves in general and minimal position representing the vertices $v_1$ and $v_2$ respectively. Suppose $\pi$ is the projection of $S_{g}\times I$ onto $S_{g}\times \{0\}$, or $S_{g,1}\times I$ onto $S_{g,1}\times \{0\}$. In \cite{Me}, Section 2.1, it is explained how to construct a surface in $S_{g}\times I$ with boundary $c_{2}-c_{1}$ from an oriented path in a curve graph passing from vertex $v_{1}$ to vertex $v_{2}$. A simpler case of this is sketched below. Strictly speaking, the objects so obtained are often only cell complexes, but are homotopic to immersed surfaces with boundary. \\

Suppose a path in $C(S_{g}; h)$ has $j$ edges labelled $i=1,\ldots, j$, where $j>1$. Informally, each edge of $C(S_{g}; h)$ represents an oriented subsurface $F_i$, of $S_{g}$, which maps into $S_{g}\times \{\frac{i}{j+1}\}\subset S_{g}\times I$. Pairs of homotopic curves with opposite orientations on the boundaries of $F_{i}$ and $F_{i+1}$ are connected up by gluing in cylinders. Similarly, the boundary of the surface is put into the boundary of the 3-manifold by gluing on cylinders. The same construction works for $S_{g,1}\times I$.\\

A surface constructed from the path $\gamma$ will be called a \textit{trace surface} of $\gamma$. The trace surface of an \textit{oriented} path in $C(S_{g}; h)$ or $C(S_{g,1}; h)$ is unique up to homotopy.\\

\textbf{Remark}. The vertices $v_1$ and $v_2$ are connected by paths of finite length. The correct intuition is that each edge of a given path represents a surface with boundary that contributes at most $\pm 1$ to the preimage function on any given subsurface. It follows that the pre-image function is finite, and each level set has a finite number of connected components. A simple means of calculating the pre-image function and hence verifying finiteness is given in \cite{Me} Section 4.\\

\textbf{Euler Integrals.} Throughout this paper we will want to compute a ``signed genus'', coming from a ``signed Euler characteristic''. The \textit{signed genus} of the trace surface of $\gamma$ is equal to the signed length of $\gamma$. To calculate signed Euler characteristic in such a way that the signs work out automatically, a very convenient notation makes use of integration with respect to Euler characteristic. The formulation used here is taken from \cite{Ghrist}; proofs are given in \cite{Shapira}. The reference \cite{Rota} is a comprehensive introduction to invariant measures with many examples of novel applications.\\

Definition 2.2 of \cite{Ghrist}. A collection $\mathcal{A}$ of subsets of a topological space $X$ is said to be \textit{tame} if $\mathcal{A}$ is closed with respect to the operations of finite intersection, finite union and complement, and all elements of $\mathcal{A}$ possess well-defined Euler characteristics. \\

An indicator function $1_U$ is the function that is equal to 1 on the set $U$ and zero elsewhere. Let $R$ be a commutative coefficient ring; in this paper, we will take $R$ to be the integers.\\

Lemma 2.4 of \cite{Ghrist}. Let $\mathcal{A}$ be a tame collection and $f=\Sigma\lambda_{\alpha} 1_{U_{\alpha}}$ a finite $R$-linear combination of indicator functions of elements $U_{\alpha}\in \mathcal{A}$. Then the integral of $f$ is defined to be
\begin{equation*}
\int_{X}fd\chi:=\Sigma_{\alpha}\lambda_{\alpha}\chi(U_{\alpha})
\end{equation*}
The value of this integral is independent of the way $f$ is written as a finite sum of indicator functions.\\

\textbf{Remark}. Pre-image functions were defined to be upper semi-continuous because we will want to relate their Euler integrals to signed genus of trace surfaces. Although the boundary of a trace surface has Euler characteristic zero, its projection to $S_{g}\times \{0\}$ or $S_{g,1}\times \{0\}$ does not, so it is necessary to be careful what value to assign it. Upper semi-continuity ensures that a union of surfaces gives rise to a pre-image function that is the sum of the pre-image functions of the surfaces.

\begin{lem}
\label{sd}
Let $v_1$ and $v_2$ be two vertices of $C(S_{g,1}; h)$. All paths in $C(S_{g,1}; h)$ connecting $v_1$ to $v_2$ have the same signed length. In $C(S_{g}; h)$, all paths connecting $v_1$ to $v_2$ have the same signed length mod $g-1$.
\end{lem}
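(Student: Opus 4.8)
The plan is to reduce the statement to a computation of the signed genus of a \emph{closed} surface in $S_{g,1}\times I$ (resp.\ $S_g\times I$). First I would observe that it suffices to treat loops: given two oriented paths $\gamma_1,\gamma_2$ in $C(S_{g,1};h)$ (resp.\ $C(S_g;h)$) from $v_1$ to $v_2$, the concatenation $\gamma=\gamma_1\ast\overline{\gamma_2}$ is a loop based at $v_1$, and since reversing a path reverses the orientation of each edge, $l_s(\gamma)=l_s(\gamma_1)-l_s(\gamma_2)$. So the assertion for $C(S_{g,1};h)$ is equivalent to ``$l_s$ of every loop is $0$'', and the assertion for $C(S_g;h)$ to ``$l_s$ of every loop is $\equiv 0\pmod{g-1}$''. (For the exceptional surface $S_3$, where the edges cannot be coherently oriented, the conclusion is unaffected, since there $+1\equiv-1\pmod 2$, as already noted in the introduction.)

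Next I would build the trace surface $H_\gamma$ of such a loop $\gamma$, using a curve $c_1$ in general position representing $v_1$. Its boundary consists of two parallel copies of $c_1$, one carried into $S\times\{0\}$ and one into $S\times\{1\}$ by the cylinders of the construction; gluing on the annulus $c_1\times I\subset S\times I$ (with $S=S_{g,1}$ or $S_g$) then produces a closed surface $\widehat H\subset S\times I$. Since an annulus has trivial pre-image function and hence zero signed genus, and since pre-image functions add when surfaces are superimposed (the upper semi-continuity remark), the signed genus of $\widehat H$ equals that of $H_\gamma$, which by the trace-surface identity recalled in this subsection is exactly $l_s(\gamma)$. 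Thus everything reduces to: what signed genera can a closed surface in $S\times I$ have?

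For this I would argue homologically. The pre-image function $p_{\widehat H}(x)=\hat i(\widehat H,\,x\times I)$ depends only on homology classes, and the relative class of the vertical segment $x\times I$ is independent of $x$, so $p_{\widehat H}$ is a constant $k\in\mathbb Z$. When $S=S_{g,1}$, the manifold $S_{g,1}\times I$ deformation retracts onto a wedge of $2g$ circles, so $H_2(S_{g,1}\times I;\mathbb Z)=0$; hence $[\widehat H]=0$, $p_{\widehat H}\equiv 0$, and $l_s(\gamma)=\int_{S_{g,1}}0\,d\chi=0$. When $S=S_g$, we have $H_2(S_g\times I;\mathbb Z)\cong\mathbb Z$, generated by $[S_g\times\{\ast\}]$, so $[\widehat H]=k[S_g\times\{\ast\}]$ for some $k\in\mathbb Z$; then $p_{\widehat H}\equiv\pm k$, so its Euler integral, and hence $l_s(\gamma)$, is a fixed multiple of $\chi(S_g)=2-2g$ (up to the normalisation fixing signed genus), in particular $\equiv 0\pmod{g-1}$. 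This completes the proof.

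The step I expect to be the main obstacle is the middle one: keeping the bookkeeping honest. Concretely, one must know precisely how signed genus is extracted from the pre-image function and that it is additive under superposition, so that closing the loop off with $c_1\times I$ genuinely contributes nothing; and one must pin down that the relative class of $x\times I$ in $H_1(S\times I,\partial)$ is independent of $x$, so that $p_{\widehat H}$ is a single constant rather than merely locally constant. The homological inputs themselves --- $H_2$ of the handlebody-like manifold $S_{g,1}\times I$ and $H_2$ of $S_g\times I$ --- are routine.
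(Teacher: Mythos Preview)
Your argument is correct, and while it rests on the same underlying identity the paper establishes --- that the signed length of a path equals $-\tfrac{1}{2}\int p_H\,d\chi$ for the associated trace surface --- it is organised differently from the paper's own proof. The paper does not reduce to loops: it takes two paths $\gamma_1,\gamma_2$ with the same endpoints, invokes a cited result (Lemma~10 of \cite{Me}) that any two surfaces with the same boundary have pre-image functions differing by an additive constant, and then pins that constant to zero in the $S_{g,1}$ case by observing that $p_H$ must vanish near $\partial S_{g,1}$. For $S_g$ it argues geometrically, exhibiting how to shift the constant by gluing in a copy of $S_g\smallsetminus c$, and invokes the fact that the two pieces of $S_g$ cut by a bounding pair have genera summing to $g-1$. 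Your route --- close the loop's trace surface off with the vertical annulus $c_1\times I$, note that a closed surface has constant pre-image function, and read off that constant from $H_2(S\times I)$ --- is more self-contained (no appeal to \cite{Me}) and makes the $g-1$ appear transparently as $-\tfrac{1}{2}\chi(S_g)$. The paper's version, on the other hand, keeps the boundary $\partial S_{g,1}$ visibly in play, which is consonant with how the rest of the argument treats open versus closed surfaces. One small point worth recording explicitly in your write-up: the vertical annulus has pre-image function zero off $c_1$, and since $\chi(c_1)=0$ any discrepancy along $c_1$ contributes nothing to the Euler integral, so closing up genuinely does not change the signed genus.
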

\begin{proof}
Lemma 10 of \cite{Me} shows that any surface in $S_{g}\times I$ with boundary $c_{2}-c_{1}$ has the same pre-image function up to an additive constant. The same argument applies to $S_{g,1}\times I$, although it is not explicitly stated. \\

We first prove the statement for open surfaces, and then explain why closed surfaces are different. Let $\gamma_{1}$  and $\gamma_{2}$ be paths connecting $v_1$ to $v_2$ in $C(S_{g,1}; h)$, where $H_1$ is the trace surface of $\gamma_{1}$ and $H_2$ the trace surface of $\gamma_{2}$. As before, $\partial H_{1}=\partial H_{2}=c_{2}-c_{1}$. By assumption, neither $c_1$ nor $c_2$ is homotopic to $\partial S_{g,1}$. From the definition of pre-image function, it follows that $p_{H}$ must be zero on some neighbourhood of $\partial S_{g,1}$. Since $p_{H_{1}}-p_{H_{2}}$ is a constant, it follows that $p_{H_{1}}=p_{H_{2}}$.\\ 

Recall that the surface $H_{1}$ is homotopic to a union of genus one subsurfaces $F_i$, each of which projects one to one onto an embedded, incompressible subsurface of $S_{g,1}\times {0}$ of genus 1. In order to define the pre-image function of $F_i$ for some $i$, strictly speaking the boundary of $F_i$ should be put in the boundary of the 3-manifold. This detail will be ignored, because it is not hard to see that a homotopy taking $F_{i}$ to $F^{'}_{i}$ can be found, such that $p_{F^{'}_{i}}$ is one on the interior of the subsurface $\pi(F_{i})$ of $S_g$ or $S_{g,1}$ and zero on $S_{g}\setminus \pi(F_{i})$ or $S_{g,1}\setminus \pi(F_{i})$. The pre-image function $p_H$ is the sum of the preimage functions of the $F_i$, except along the curves where annuli are glued in to join the $F_i$ together, over which $p_H$ varies continuously.\\

The signed genus of $F_i$ is obtained by
\begin{equation*}
\frac{-1}{2}\int_{S_{g}}p_{F_{i}}d\chi
\end{equation*}

Depending on the orientation of $F_i$, this will be plus or minus one. Since integrals preserve finite linear combinations,  it follows that the signed length of $\gamma_{1}$ is equal to 

\begin{equation}
\label{hom2}
\frac{-1}{2}\int_{S_{g,1}}p_{H_{1}}d\chi
\end{equation}

which is independent of the choice of $\gamma_1$ because $p_{H_{1}}$ is.\\

The proof is the same for closed surfaces, except that paths can be chosen such that the pre-image functions of the trace surfaces differ by any integer. To see this, we show how to increase or decrease the pre-image function by one. Suppose $v$ is a vertex on $\gamma_1$ corresponding to the curve $c$. Cut $H_1$ along the curve $c$, and glue in a copy of a surface homotopic in $S_{g}\times I$ to $(S_{g}\times \{1/2\})\setminus (c\times \{1/2\}$. The orientation of the surface glued in will determine whether the pre-image function increases or decreases by one.\\


To see that we need to take mod $g-1$ instead of mod $g$, note that, when $S_g$ is cut into two pieces by a pair of disjoint, homologous curves, the genus of these two pieces sums to $g-1$, not $g$, as illustrated in figure \ref{boundingpair}. \\

\end{proof}

\textbf{Remark}. The ambiguity in signed distance on $C(S_{g};h)$ will be related to the observation that Chillingworth's winding numbers are only defined mod $2g-2$ for $S_{g}$. \\

When defining signed stable length on $C(S_{g},h)$, it is necessary to make sense of the limit. If the limit exists at all, is it independent of the choice of path mod $g-1$? The next lemma will be used to resolve these problems.

\begin{lem}
 Let $v$ and $w$ be any two vertices of $C(S_{g},h)$, and $\tau$ an element of $\mathcal{T}_g$. Then for any $n\in \mathbb{N}$,
\begin{equation*}
d_{s}(w,\tau^{n}w)\equiv d_{s}(v,\tau^{n}v)\equiv nd_{s}(v,\tau v) \text{ mod } g-1
\end{equation*}
Similarly, when $w$ and $v$ are vertices of $C(S_{g,1};h)$ and $\tau\in \mathcal{T}_{g,1}$,
\begin{equation*}
d_{s}(w,\tau^{n}w)= d_{s}(v,\tau^{n}v)= nd_{s}(v,\tau v) 
\end{equation*}
\label{nolimit}
\end{lem}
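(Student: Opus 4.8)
The plan is to establish the three claimed equalities (resp.\ congruences) by reducing everything to the additivity of signed distance along concatenated paths, together with the $\mathcal{T}_{g}$- (resp.\ $\mathcal{T}_{g,1}$-)invariance of $d_s$ recorded just before the statement. First I would observe that signed distance is a (signed) cocycle: if $\gamma$ is a path from $u_1$ to $u_2$ and $\gamma'$ a path from $u_2$ to $u_3$, then $\gamma\cdot\gamma'$ is a path from $u_1$ to $u_3$ with $l_s(\gamma\cdot\gamma') = l_s(\gamma) + l_s(\gamma')$, simply because the count ``positive edges minus negative edges'' is additive over concatenation. By Lemma~\ref{sd}, $l_s$ depends only on the endpoints (exactly, for $S_{g,1}$; mod $g-1$ for $S_g$), so this gives
\begin{equation*}
d_s(u_1,u_3) = d_s(u_1,u_2) + d_s(u_2,u_3)
\end{equation*}
in $C(S_{g,1};h)$, and the same equation mod $g-1$ in $C(S_g;h)$. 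In particular $d_s(u,u)=0$ and $d_s(u_2,u_1) = -d_s(u_1,u_2)$.

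Next I would prove the second equality $d_s(v,\tau^n v) = n\, d_s(v,\tau v)$ (for $S_{g,1}$; mod $g-1$ for $S_g$) by induction on $n$, using the telescoping decomposition along the orbit. Write the path from $v$ to $\tau^n v$ as the concatenation of paths from $\tau^{k}v$ to $\tau^{k+1}v$ for $k = 0,\dots,n-1$. By the cocycle identity,
\begin{equation*}
d_s(v,\tau^n v) = \sum_{k=0}^{n-1} d_s(\tau^k v, \tau^{k+1} v).
\end{equation*}
Since $\tau$ acts by an isometry preserving signed distance (the Torelli group preserves orientation, hence the $\pm1$ labels on edges), each term satisfies $d_s(\tau^k v, \tau^{k+1}v) = d_s(v,\tau v)$; summing gives the claim. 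For $S_g$ the same computation is valid with every ``$=$'' replaced by ``$\equiv \ \mathrm{mod}\ g-1$'', since Lemma~\ref{sd} only guarantees well-definedness of $d_s$ mod $g-1$ there, and a sum of $n$ congruences mod $g-1$ is a congruence mod $g-1$.

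Finally, for the first equality $d_s(w,\tau^n w) = d_s(v,\tau^n v)$, I would again use the cocycle identity together with invariance: inserting $v$ and $\tau^n v$ as intermediate points,
\begin{equation*}
d_s(w,\tau^n w) = d_s(w,v) + d_s(v,\tau^n v) + d_s(\tau^n v, \tau^n w),
\end{equation*}
and $d_s(\tau^n v, \tau^n w) = d_s(v,w) = -d_s(w,v)$ by invariance under $\tau^n$, so the outer two terms cancel and we are left with $d_s(v,\tau^n v)$. Again this holds exactly for $S_{g,1}$ and mod $g-1$ for $S_g$. I do not expect a serious obstacle here; the only point requiring slight care is bookkeeping the mod $g-1$ reduction in the closed case — one must check that the cocycle identity and the invariance statement both descend to $\mathbb{Z}/(g-1)$, which they do because each is proved for honest paths before passing to the quotient. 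Having $d_s(v,\tau^n v) = n\, d_s(v,\tau v)$ in hand, the assertion in the introduction that $\phi(\tau,h) = d_s(v,\tau v)$ is then immediate: the defining limit $\lim_n d_s(v,\tau^n v)/n$ exists and equals $d_s(v,\tau v)$ (exactly for $S_{g,1}$), and the independence of $v$ follows from the first equality.
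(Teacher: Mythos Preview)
Your proof is correct and follows essentially the same approach as the paper's: both arguments concatenate paths and use the additivity of signed length together with the $\tau$-invariance of $d_s$, with your cocycle identity making explicit what the paper leaves implicit in its choice of path through $v$ and $\tau^n v$. The only cosmetic difference is the order in which you establish the two equalities.
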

\begin{proof}
From Lemma \ref{sd}, any path connecting $w$ to $\tau^{n}w$ can be used to calculate $d_{s}(w,\tau^{n}w)$. So suppose $\gamma$ is a path connecting $w$ to $\tau^{n}w$ passing through $v$ and $\tau^{n}v$, in that order. Suppose also that the unoriented subpath of $\gamma$ connecting $\tau^{n}v$ to $\tau^{n}w$ is the image under $\tau^{n}$ of the subpath connecting $w$ to $v$. Reversing the orientation of a path changes the sign of its signed length, and the action of $\tau$ preserves signed distance, so the signed length of the subpath connecting $w$ to $v$ cancels out the signed length of the subpath connecting $\tau^{n}v$ to $\tau^{n}w$. It follows that $d_{s}(w,\tau^{n}w)\equiv d_{s}(v,\tau^{n}v)$.\\

To calculate $d_{s}(v,\tau^{n}v)$, now suppose $\gamma$ is the union of oriented subpaths $\delta$, $\tau \delta,\ldots,\tau^{n}\delta$, where $\delta$ connects $v$ to $\tau v$. Since $\tau$ preserves signed distance, the second equality follows.\\

The proof is identical for $S_{g,1}$.
\end{proof}





\subsection{Signed Stable Length and Cohomology} 

Choose an element $\tau$ from $\mathcal{T}_g$ or $\mathcal{T}_{g,1}$, and consider it fixed throughout this section. A similar argument to that in the previous section is used to show that the map from $H_{1}(S_{g,1};\mathbb{Z})$ or $H_{1}(S_{g};\mathbb{Z})$ into $\mathbb{Z}$ defined by $\phi$ is a homomorphism.\\

\begin{lem}
\label{pathindependenceagain}
The signed stable length $\phi$ defines an element of $H^{1}(S_{g,1},\mathbb{Z})$ or $H^{1}(S_{g}, \mathbb{Z}/\langle g-1 \rangle)$
\end{lem}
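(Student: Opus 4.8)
The plan is to show that $h \mapsto \phi(\tau, h)$ is a homomorphism $H_1(S_{g,1};\mathbb{Z}) \to \mathbb{Z}$ (respectively $H_1(S_g;\mathbb{Z}) \to \mathbb{Z}/\langle g-1\rangle$), since a homomorphism out of $H_1$ is exactly an element of $H^1$ with the stated coefficients by the universal coefficient theorem. By Lemma \ref{nolimit} we already know $\phi(\tau, h) = d_s(v, \tau v)$ for any vertex $v$ of $C(S_{g,1}; h)$, so the issue is purely to understand how this quantity behaves under addition of homology classes, and the convention $\phi(\tau, \lambda[c]) := \lambda\phi(\tau,[c])$ handles scaling, so it suffices to treat primitive classes and verify additivity $\phi(\tau, h_1 + h_2) = \phi(\tau, h_1) + \phi(\tau, h_2)$ whenever $h_1$, $h_2$ and $h_1 + h_2$ are all primitive (this is enough to pin down the homomorphism, since any homology class is an integer combination of such).

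First I would realize $d_s(v, \tau v)$ as a Euler integral of a pre-image function, exactly as in the proof of Lemma \ref{sd}: pick disjoint oriented simple closed curves $c_1, c_2$ with $[c_1] = h_1$, $[c_2] = h_2$ and $c_1 + c_2$ representing a simple closed curve in class $h_1 + h_2$ (a Hatcher-style argument provides such representatives), form the trace surfaces of short paths from $v_i$ to $\tau v_i$ in $C(S_{g,1}; h_i)$, and use that signed length equals $\tfrac{-1}{2}\int p_H \, d\chi$. The key point is that the trace surface for the class $h_1 + h_2$ can be taken to be (homotopic to) the union of the trace surfaces for $h_1$ and $h_2$, essentially by stacking them in $S_{g,1} \times I$ and resolving the curves $c_1 \cup c_2$ into $c_1 + c_2$; then the pre-image function of the union is the sum of the pre-image functions — this is exactly the reason upper semi-continuity was built into the definition, as noted in the Remark before Lemma \ref{sd} — and additivity of $\phi$ follows because the Euler integral is linear on linear combinations of indicator functions (Lemma 2.4 of \cite{Ghrist}).

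For the closed case $S_g$, the same construction works verbatim, except that every equality is only an equality mod $g-1$ because of the ambiguity established in Lemma \ref{sd}; so one gets a well-defined homomorphism $H_1(S_g;\mathbb{Z}) \to \mathbb{Z}/\langle g-1\rangle$, which is an element of $H^1(S_g; \mathbb{Z}/\langle g-1\rangle)$. I would also remark that independence of the choice of base vertex $v$, already noted in the introduction via connectivity of the curve graphs and the Torelli action by isometries, is what makes $\phi(\tau, \param)$ a well-defined function of $h$ in the first place, so no further work is needed there.

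The main obstacle I expect is the geometric step of showing the trace surface for $h_1 + h_2$ is homotopic to the union of the trace surfaces for $h_1$ and $h_2$ — i.e. that resolving $c_1 \cup c_2$ to a representative of $h_1 + h_2$ at each "time slice" can be done compatibly along the whole path and with $\tau$ applied, so that the two stacked surfaces genuinely glue up into a single trace surface with the additive pre-image function. One has to check that the curves stay in minimal/general position, that the genus-one pieces $F_i$ are unaffected by the resolution away from a neighbourhood of $c_1 \cup c_2$, and that the gluing annuli do not introduce extra contributions to $\tfrac{-1}{2}\int p_H\, d\chi$; the continuity of $p_H$ across the gluing annuli (already used in Lemma \ref{sd}) is what makes this last point go through. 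Everything else is a formal consequence of the Euler-integral machinery.
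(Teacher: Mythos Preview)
Your overall strategy is the same as the paper's: express $d_s(v,\tau v)$ as the Euler integral $\tfrac{-1}{2}\int p_H\,d\chi$ of a pre-image function, and then deduce additivity in $h$ from additivity of pre-image functions under disjoint union of surfaces. The difference lies entirely in how the ``gluing'' step is handled, and the paper's device neatly dissolves the obstacle you flag in your last paragraph.

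Rather than trying to arrange that a trace surface for $h_1+h_2$ is literally the union of trace surfaces for $h_1$ and $h_2$, the paper fixes a homology basis $\{[\alpha_i],[\beta_i]\}$, writes $[c]=\sum a_i[\alpha_i]+b_i[\beta_i]$, and builds a surface with boundary $\tau c - c$ as $H\cup F_1\cup F_2$, where $H$ has boundary $\tau(\text{multicurve})-(\text{multicurve})$ for the multicurve $\sum a_i\alpha_i+\sum b_i\beta_i$, while $F_1$ and $F_2$ are the two ``correction'' cobordisms between $c$ and the multicurve (one at the bottom, one pushed through $\tau$ at the top). The point is that $\partial F_1=-\tau(\partial F_2)$, and since $\tau$ preserves signed genus, the contributions of $F_1$ and $F_2$ to $\tfrac{-1}{2}\int p\,d\chi$ cancel (exactly in the $S_{g,1}$ case, mod $g-1$ in the closed case). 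What remains is $H$, which is visibly a disjoint union of $a_i$ copies of a surface for $\alpha_i$ and $b_i$ copies of one for $\beta_i$, giving $p_H=\sum p_{A_i}+\sum p_{B_i}$ and hence linearity.

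This is precisely the missing idea in your sketch: you do not need the trace surfaces for $h_1$ and $h_2$ to ``glue compatibly'' into one for $h_1+h_2$; you allow the discrepancy and observe that the two error pieces are exchanged (up to sign) by $\tau$, so their signed genera cancel. Incidentally, there is a small internal inconsistency in your proposal: you ask for $c_1,c_2$ disjoint and then speak of ``resolving $c_1\cup c_2$''; if they are disjoint there is nothing to resolve, and passing from $c_1\cup c_2$ to a single simple curve requires a band, which is again a correction surface of exactly the sort the paper shows cancels against its $\tau$-image.
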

\begin{proof}
This lemma needs to show that $\phi$ is a homomorphism on homology. Let $([\alpha_{1}],\ldots,[\alpha_{g}],[\beta_{1}],\ldots,[\beta_{g}])$ be a set of generators for
$H_{1}(S_{g,1}, \mathbb{Z})$ or $H_{1}(S_{g}, \mathbb{Z})$. A union of oriented curves consisting of $a_1$ curves in the homotopy class $\alpha_1$, $a_2$ curves in the homotopy class $\alpha_2$, etc. will be donoted by $a_{1}\alpha_{1}+a_{2}\alpha_{2}+\ldots$, where a minus sign denotes reversed orientation. Suppose $c$ is a curve representing the homology class $[c]$, where
\begin{equation*}
[c]=\Sigma_{i=1}^{g}a_{i}[\alpha_{i}]+b_{i}[\beta_{i}]
\end{equation*}
For any $n$, a cell complex homotopic to an immersed surface in $S_{g}\times I$ or $S_{g,1}\times I$ with boundary $\tau c-c$ can be constructed by taking a surface $H$ with boundary $\tau (a_{1}\alpha_{1}+\ldots +a_{g}\alpha_{g}+b_{1}\beta_{1}+\ldots +b_{g}\beta_{g})-a_{1}\alpha_{1}-\ldots -a_{g}\alpha_{g}-b_{1}\beta_{1}-\ldots - b_{g}\beta_{g}$, and attaching two surfaces, $F_1$ and $F_2$. The surface $F_1$ has boundary $\tau c-\tau (a_{1}\alpha_{1}+\ldots a_{g}\alpha_{g}+b_{1}\beta_{1}+\ldots b_{g}\beta_{g})$, $F_2$ has boundary $a_{1}\alpha_{1}+\ldots +a_{g}\alpha_{g}+b_{1}\beta_{1}+\ldots + b_{g}\beta_{g}-c$.\\

Since $\partial F_{1}$ is $-\tau(\partial F_{2})$, the contribution of $F_1$ and $F_{2}$ to the signed genus of a surface with boundary $\tau c-c$ cancel out, either absolutely or mod $g-1$, depending on whether the surface is in $S_{g,1}\times I$ or $S_{g}\times I$. This implies that 
\begin{equation*}
\phi(\tau, [c])=\frac{-1}{2}\int p_{H}
\end{equation*}

For $S_{g,1}\times I$ it has been shown that the pre-image function only depends on the boundary of the surface, so $H$ can be chosen to be a union of surfaces $\{A_{i}\}$ and $\{B_{i}\}$, where $A_{i}$ is a union of $a_i$ copies of a surface with boundary $\tau \alpha_{i}-\alpha_{i}$ and $B_{i}$ is a union of $b_i$ copies of a surface with boundary $\tau \beta_{i}-\beta_{i}$. It follows that 
\begin{equation*}
p_{H}=\Sigma_{i=1}^{ g}p_{A_{i}}+p_{B_{i}}
\end{equation*}
For $S_{g,1}$ the lemma follows from equation \ref{hom2}. Similarly for $S_g$.
\end{proof}

\subsection{Signed Stable Length is a Homomorphism on Torelli.} 


The proof that $\phi$ is a homomorphism is very similar to the proof of Lemma \ref{nolimit}.

\begin{lem}
The map $\phi$ is a homomorphism.
\label{homomorphism}
\end{lem}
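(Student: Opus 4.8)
The plan is to reduce the statement to the two lemmas already in hand. Lemma~\ref{nolimit} gives $\phi(\tau,h)=d_{s}(v,\tau v)$ for every vertex $v$ (an equality mod $g-1$ in the closed case), and Lemma~\ref{sd} says the signed length of a path depends only on its endpoints. So fix $\sigma,\tau$ in $\mathcal{T}_{g,1}$ (the case of $\mathcal{T}_g$ is word-for-word the same, with all equalities read mod $g-1$), fix a vertex $v$ of $C(S_{g,1};h)$, and aim to show $\phi(\sigma\tau,h)=\phi(\sigma,h)+\phi(\tau,h)$.

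First I would pick any path $\delta$ from $v$ to $\tau v$ and any path $\epsilon$ from $v$ to $\sigma v$; by Lemma~\ref{sd} their signed lengths are $l_{s}(\delta)=d_{s}(v,\tau v)=\phi(\tau,h)$ and $l_{s}(\epsilon)=d_{s}(v,\sigma v)=\phi(\sigma,h)$. Now apply the homeomorphism $\sigma$ to $\delta$: since $\delta$ runs from $v$ to $\tau v$, the path $\sigma\delta$ runs from $\sigma v$ to $\sigma(\tau v)=(\sigma\tau)v$, so the concatenation $\gamma=\epsilon\cdot(\sigma\delta)$ is a path from $v$ to $(\sigma\tau)v$. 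Because $\sigma$ lies in the Torelli group it preserves the orientation of $S_{g,1}$, hence the $\pm1$ labelling of the edges of $C(S_{g,1};h)$, and it acts by an isometry; therefore $l_{s}(\sigma\delta)=l_{s}(\delta)=\phi(\tau,h)$, and $l_{s}(\gamma)=l_{s}(\epsilon)+l_{s}(\sigma\delta)=\phi(\sigma,h)+\phi(\tau,h)$. Feeding $\gamma$ back into Lemma~\ref{sd} yields $\phi(\sigma\tau,h)=d_{s}(v,(\sigma\tau)v)=l_{s}(\gamma)=\phi(\sigma,h)+\phi(\tau,h)$, which in the closed case is precisely an identity in $\mathbb{Z}/\langle g-1\rangle$. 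Combined with $\phi(\mathrm{id},h)=d_{s}(v,v)=0$ (the empty path), this shows $\tau\mapsto\phi(\tau,h)$ is a homomorphism $\mathcal{T}_{g,1}\to\mathbb{Z}$ (resp.\ $\mathcal{T}_g\to\mathbb{Z}/\langle g-1\rangle$) for each primitive $h$; the extension to non-primitive $h=\lambda[c]$ by $\phi(\tau,h)=\lambda\phi(\tau,[c])$ is then automatically a homomorphism.

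The only place the argument is not purely mechanical is the non-commutativity of the Torelli group: the intermediate vertex has to be inserted as $\sigma v$, not $\tau v$, so that the second leg of $\gamma$ is $\sigma$ applied to a path realizing $\phi(\tau,h)$ — and it is essential that the transporting isometry be $\sigma$ (consistent with the left-action convention implicit in Lemma~\ref{nolimit}), not $\tau$. Beyond that, the only bookkeeping is keeping the closed-surface computation mod $g-1$, but since every equality above already holds only mod $g-1$ in that case, no new difficulty arises. So I expect the "main obstacle" to be essentially notational rather than substantive.
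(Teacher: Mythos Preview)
Your argument is correct and is essentially the paper's own proof: split the displacement $v\to(\sigma\tau)v$ into two legs using path-independence (Lemma~\ref{sd}) and then use that the Torelli action preserves signed length. The only cosmetic difference is the choice of intermediate vertex: the paper writes $\tau=\tau_2\tau_1$ and splits at $\tau_1 c$, so the second leg is $d_s(\tau_1 c,\tau_2(\tau_1 c))$, which equals $d_s(c,\tau_2 c)$ by basepoint-independence (Lemma~\ref{nolimit}); you split at $\sigma v$ and transport the second leg by $\sigma$. Both work, so your remark that the intermediate vertex ``has to be'' $\sigma v$ is a little too strong --- inserting $\tau v$ is exactly what the paper does, one just invokes Lemma~\ref{nolimit} rather than an explicit transported path.
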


\begin{proof}
Suppose that $\tau, \tau_{1}$ and $\tau_{2} \in \mathcal{T}_{g,1}$, where $\tau=\tau_{2}\tau_{1}$. By the path independence of signed distance,
\begin{equation*}
d_{s}(c,\tau c)=d_{s}(c, \tau_{2}\tau_{1}c)=d_{s}(c, \tau_{1}c)+d_{s}(\tau_{1}c, \tau_{2}\tau_{1}c).
\end{equation*}
Since the action of the Torelli group preserves signed distances, it follows that 
\begin{equation*}
d_{s}(c, \tau_{1}c)+d_{s}(\tau_{1}c, \tau_{2}\tau_{1}c)=d_{s}(c, \tau_{1}c)+d_{s}(c, \tau_{2}c).
\end{equation*}
The proof is identical for $\tau_1$ and $\tau_2$ in $\mathcal{T}_{g}$.
\end{proof}

\subsection{Signed Stable Length is Zero on the Johnson Kernel}
\label{kernel}
Recall that the Johnson kernel is the kernel of the Johnson homomorphism, and is generated by Dehn twists around separating curves, \cite{Johnson2}. Once we have evaluated signed stable length on bounding pair maps, it would be possible to use the lantern relation to write a Dehn twist around a separating curve as a product of bounding pair maps, and show that the signed distance between any two vertices of $C(S_{g,1}; h)$ or $C(S_{g}; h)$ in the same orbit of the Johnson kernel is zero. However it is also not hard to show this directly from the definition, which is what will be done in this subsection. The basic idea is that a separating curve $s$ has zero algebraic intersection number with any other curve, so Dehn twisting around $s$ essentially adds as many copies of $s$ as it does $-s$.\\

Let $i(a,b)$ denote the geometric intersection number of the curve $a$ with the curve $b$, i.e. the minimum possible number of points of intersection between two curves, one of which is isotopic to $a$ and the other to $b$.  \\

\begin{figure}
\includegraphics[width=\textwidth]{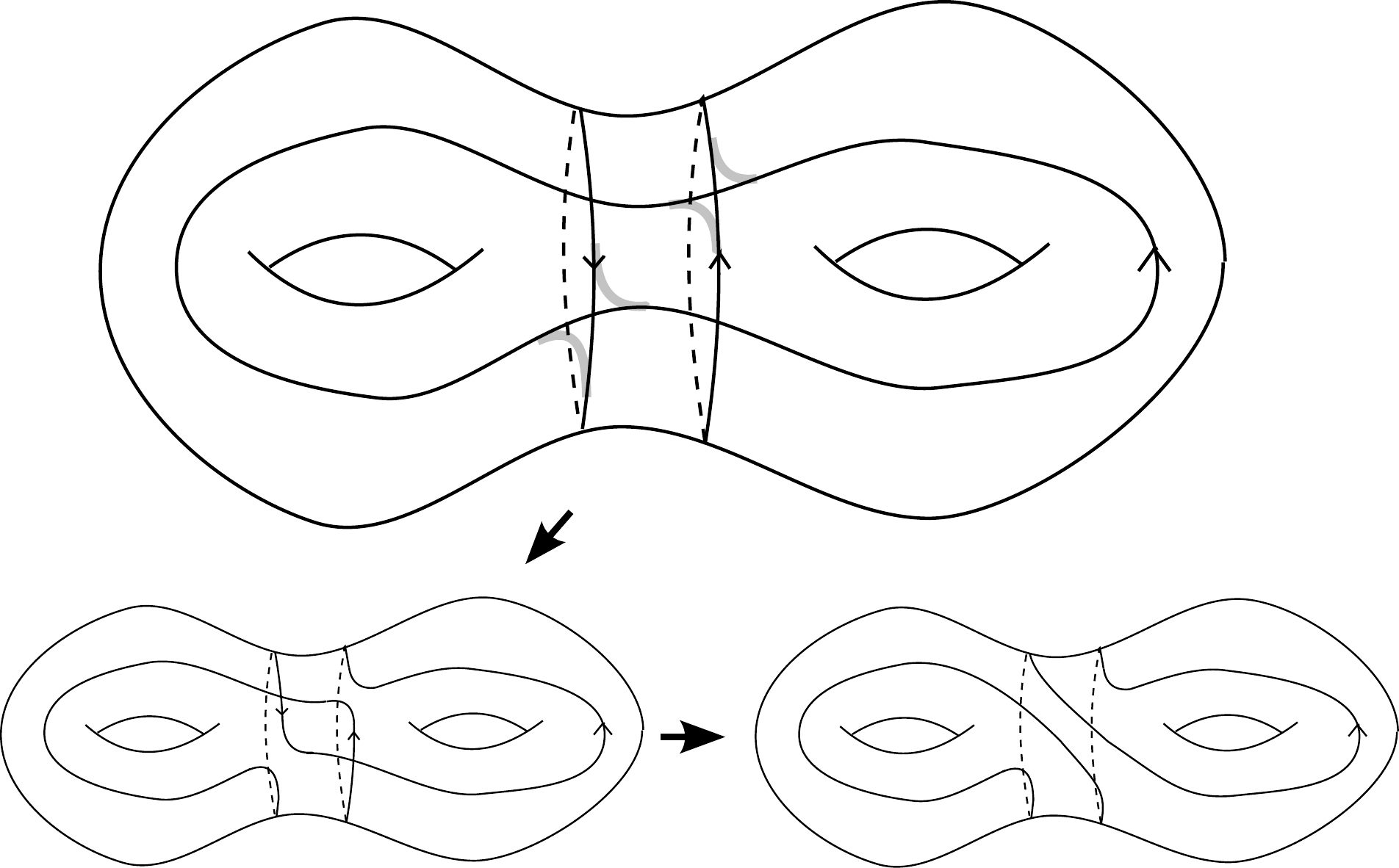}
\caption{The surgeries to resolve the points of intersection are shown in grey in the top figure. The horizontal arrow denotes a homotopy.}
\label{resolve}
\end{figure}

Let $c$ be a curve representing a vertex in $C(S_{g,1}; h)$ or $C(S_{g}; h)$. Up to homotopy, a curve $T_{s}(c)$ can be obtained by taking the union of $c$, $i(c,s)/2$ copies of the curve $s$, and $i(c,s)/2$ copies of $-s$, and performing 1-surgeries to resolve the points of intersection, as illustrated in figure \ref{resolve}. The null homologous union of curves 

\begin{equation*}
T_{s}c-c-\frac{i(c,s)}{2}s+\frac{i(c,s)}{2}s
\end{equation*}
is the boundary of an incompressible surface $F$ in $S_{g,1}\times I$ or $S_{g}\times I$. The support of $p_{F}$ is contained within the union of tubular neighbourhoods of $c$ and tubular neighbourhoods of $i(c,s)$ disjoint curves freely homotopic to $s$. Since the union of tubular neighbourhoods has genus zero, so does $F$. Since
\begin{equation*}
-\frac{i(c,s)}{2}s+\frac{i(c,s)}{2}s
\end{equation*}
is the boundary of a surface with zero signed genus, the same is true for a surface with boundary $T_{s}c-c$.\\

\subsection{Evaluation on the generators.}\label{evaluation} Now that $\phi$ has been shown to be a homomorphism both on the Torelli group and on $H_{1}(S_{g,1},\mathbb{Z})$ and $H_{1}(S_{g},\mathbb{Z}/\langle g-1\rangle)$, all that remains to prove Theorem 1 is to verify it on a choice of generating sets.\\

The Torelli group is generated by bounding pair maps. Let $(a,b)$ be a bounding pair. Choose curves representing a basis $B$ for $H_{1}(S_{g,1},\mathbb{Z})$, so that only one of them, $\alpha$, intersects $a$ and $b$, and $i(a,\alpha)=i(b,\alpha)=1$. Clearly, $d_{s}(\beta, (T_{a}T^{-1}_{b})\beta)=0$ whenever $[\beta]$ is any generator other than $[\alpha]$. The same argument as in Section \ref{kernel} shows that $d_{s}(\alpha, (T_{a}T^{-1}_{b})\alpha)$ is equal to the signed genus of the surface with boundary $a - b$. So $\phi(T_{a}T^{-1}_{b},[\alpha])$ is equal to the signed genus of the subsurface bounded by $a - b$ and $\phi(T_{a}T^{-1}_{b},[\beta])=0$. This is consistent with half the Chillingworth class evaluated on this homology basis, as computed in \cite{Johnson}.\\

For a closed surface the calculation is identical. Let $B$ be a homology basis chosen as in the previous paragraph. It follows from Section 6 of \cite{Johnson}, that this is again equal to half of the Chillingworth class evaluated on the basis $B$.
\end{proof}

This concludes the proof of Theorem \ref{Maintheorem}.\\

\bibliographystyle{plain}
\nocite{*}
\bibliography{Chillbib}

\begin{thebibliography}{10}

\bibitem{Ghrist}
Y.~Baryshnikov and R.~Ghrist.
\newblock Target enumeration via integration with respect to {E}uler
  characteristic.
\newblock {\em {SIAM.} {J.} {A}ppl. {M}ath.}, 70:825, 2009.

\bibitem{scl}
{D}. {C}alegari.
\newblock {\em Scl}, volume~20.
\newblock Mathematical Society of Japan, 2009.

\bibitem{Cal}
D.~Calegari and A.~Walker.
\newblock Surface subgroups from linear programming.
\newblock ar{X}iv:1212.2618v1, 2012.

\bibitem{Chill}
D.~Chillingworth.
\newblock Winding numbers on surfaces, {I}.
\newblock {\em Math. Ann.}, 196:218, 1972.

\bibitem{Chill2}
D.~Chillingworth.
\newblock Winding numbers on surfaces, {II}.
\newblock {\em Math. Ann.}, 199:131, 1972.

\bibitem{Dold}
A.~Dold.
\newblock {\em Lectures on Algebraic Topology}.
\newblock Springer-Verlag, 1972.

\bibitem{Mappingclassbible}
B.~Farb and D.~Margalit.
\newblock {\em A Primer on Mapping Class Groups}.
\newblock Princeton University Press, 2011.

\bibitem{Cycle}
A.~Hatcher.
\newblock The cyclic cycle complex of a surface.
\newblock {A}vailable at http://www.math.cornell.edu/~hatcher/Papers/cycles,
  2008.

\bibitem{Me}
I.~Irmer.
\newblock Geometry of the homology curve complex.
\newblock {\em J. Topol. Anal.}, 04:335, 2012.

\bibitem{Johnson}
D.~Johnson.
\newblock An abelian quotient of the mapping class group $\mathcal{T}_{g}$.
\newblock {\em Math. Ann.}, 249:225, 1980.

\bibitem{Johnsonsurvey}
D.~Johnson.
\newblock A survey of the {T}orelli group.
\newblock volume~20 of {\em Contemp. Math.}, page 165. Amer. Math. Soc., 1983.

\bibitem{Johnson2}
D.~Johnson.
\newblock The structure of the {T}orelli group. {II}. {A} characterization of
  the group generated by twists on bounding curves.
\newblock 24:113, 1985.

\bibitem{Rota}
D.~Klain and C.~Rota.
\newblock {\em Introduction to Geometric Probability}.
\newblock Cambridge University Press, 1997.

\bibitem{Morita}
S.~Morita.
\newblock Casson's invariant for homology 3-spheres and characteristic classes
  of surface bundles, {I}.
\newblock {\em Topology}, 28:305, 1989.

\bibitem{Shapira}
P.~Schapira.
\newblock Operations on constructible functions.
\newblock {\em {J.} {P}ure {A}ppl. {A}lgebra}, 72:83.

\bibitem{Thurstonnorm}
W.~Thurston.
\newblock A norm for the homology of 3-manifolds.
\newblock {\em Mem. Amer. Math. Soc.}, 59:99, 1986.

\end{thebibliography}

\end{document}